\documentclass[10pt, reqno]{amsart} 

\usepackage{amsmath, amsthm, amssymb, amsfonts}
\usepackage{mathrsfs}
\usepackage{enumerate}
\usepackage{enumitem}
\usepackage{graphicx}
\usepackage{caption, subcaption}
\usepackage{float}
\usepackage{mathtools}

\usepackage{microtype}
\usepackage[colorlinks=true, linkcolor=blue, citecolor=red, urlcolor=blue]{hyperref}
\usepackage{orcidlink}

\usepackage{lineno} 

\usepackage[colorinlistoftodos, textsize=footnotesize]{todonotes}

\usepackage{breqn}

\theoremstyle{plain}
\newtheorem{theorem}{Theorem}[section]

\newtheorem{lemma}[theorem]{Lemma}

\theoremstyle{definition}

\theoremstyle{remark}

\allowdisplaybreaks

\title{Two absolutely bounded determinantal ratios}

\author{Hristo Sendov \orcidlink{0000-0002-0908-1535}}
\address{Department of Statistical and Actuarial Sciences \\
Department of Mathematics \\
Western University \\
1151 Richmond Street \\
London, ON, N6A 5B7 Canada}
\email{hsendov@uwo.ca}
\thanks{The first author was partially supported by the Natural Sciences and Engineering Research Council (NSERC) of Canada. (Grant number RGPIN-2020-06425.)}

\author{Mengxu Yuan \orcidlink{0009-0002-5166-183X}}
\address{Department of Mathematics \\
Western University \\
1151 Richmond Street \\
London, ON, N6A 5B7 Canada}
\email{myuan89@uwo.ca}

\subjclass[2020]{Primary 15A15, 15A45} 
\keywords{Determinantal Inequalities, Principal Minors, Positive Definite Matrix, Bounded Ratios, Hadamard's inequality, Fisher's inequality, Koteljanskii's inequality}

\begin{document}

\begin{abstract}
Bounded ratios of products of minors of positive definite matrices have a long history, starting with Hadamard's inequality in 1893. It states that for every positive semidefinite matrix $A$
$$
\det A \le A_{11} \cdots A_{nn}.
$$
This inequality was subsequently generalized by Fisher and then further by Koteljanskii. The latter states that for every positive semidefinite matrix $A$ and any index sets $\alpha_1, \alpha_2 \subseteq \{1,\ldots, n\}$ one has
$$
\det A[\alpha_1 \cup \alpha_2] \det A[\alpha_1 \cap \alpha_2] \le \det A[\alpha_1] \det A[\alpha_2],
$$
where $A[\alpha]$ denotes the principal submatrix determined by the indexes in $\alpha$.

In a manuscript published only on the arXiv in 2008, Hall and Johnson made three conjectures about ratios of products of principal minors of  \(4\times4\) positive definite matrices, denoted by $R_i$, $i=1,2,3$, see \eqref{2026-06-19-R1} and \eqref{2026-06-19-Ri}. They hypothesized that the supremum of $R_1$ was $27/16$, while the supremum of the other two ratios was $1$. Such ratios are called {\it absolutely bounded}.  The conjecture for $R_1$ was affirmed in \cite{sendov2026} and it is the only known bounded determinantal ratio with supremum bigger than one.  The goal of this paper is to affirm the conjecture for $R_2$ and $R_3$.

It is known that the upper bound for the ratios $R_i$, $i=1,2,3$, does not follow from repeated applications of Koteljanskii's inequality. In addition, Hall and Johnson showed that $R_i$ is bounded above by $4$, for $i=1,2,3$.
\end{abstract}

\maketitle

\section{Introduction}

Let $A := (a_{ij})$ denote an $n$-by-$n$ positive definite matrix. Consider a collection of index sets
$\alpha_i \subseteq \{1,\dots,n\}$, $i=1,\ldots, p$ and define $A[\alpha_i]$ to be the principal submatrix of $A$ at the intersection of the rows and columns indexed by $\alpha_i$. By convention, $A[\emptyset] := 1$.  Let $\alpha := \{\alpha_1,\ldots, \alpha_p\}$ and define the product of determinants 
$$
A(\alpha) := \prod_{i=1}^p \det A[\alpha_i].
$$
Much effort has been made to characterize the collection of index sets $\alpha$ and $\beta$ for which the 
ratio   $A(\alpha)/A(\beta)$ is bounded (more specifically, bounded by one) for all positive definite matrices $A$. Examples of these types of ratios are Hadamard's inequality and the more general Fisher's inequality. They are further generalized by Koteljanskii’s inequality
\begin{align}
\label{2022-07-27-ineq-3}
\frac{\det A[\alpha_1 \cup \alpha_2] \det A[\alpha_1 \cap \alpha_2] }{\det A[\alpha_1] \det A[\alpha_2]} \leq 1,
\end{align}
which holds for any $\alpha_1, \alpha_2 \subseteq \{1,\dots,n\}$ and any positive definite $A$. For more information see Theorems 7.8.1, 7.8.5, and 7.8.9 in \cite{Horn:1990}. 
Various generalizations and extensions of inequality~\eqref{2022-07-27-ineq-3} appear in \cite{Barrett:1989}, \cite{Braun:2023},  \cite{Choi:2016}, \cite{Dong:2022}, \cite{Fallat:2001}, \cite{Fallat:2003}, \cite{Fu:2017},\cite{Johnson:1985},  \cite{Johnson:1993}, \cite{Jiang:2019}, and many other works.

Finding other bounded determinantal ratios is a very difficult problem and proving them is even harder. Necessary and sufficient conditions (but not both) were formulated in \cite{Johnson:1993}. In \cite{TracyHall:2008}, the authors conjectured that
\begin{align}
 \label{2026-06-19-R1}
R_1(A) := \frac{A(\{\{1,2,4\}, \{1,3,4\}, \{2,3\}, \{1\}, \{4\} \})}{A(\{ \{1,2\}, \{1,3\}, \{1,4\}, \{2,4\}, \{3,4\} \})} \le \frac{27}{16}.
\end{align}
This upper bound was proven in \cite{sendov2026}, where a sequence of positive semidefinite matrices was given for which the ratio approaches $27/16.$

Two more conjectures were stated in \cite{TracyHall:2008}, namely that the supremums of the following ratios are $1$
\begin{align} 
    R_2(A) &= \frac{A(\{\{1,2,3,4\},\{1,2,3,4\}, \{2,3\},\{2,4\},\{3,4\},\{1\}, \emptyset\})}{A(\{\{1,2,3\}, \{1,2,4\}, \{1,3,4\}, \{2,3,4\}, \{2\},\{3\},\{4\}\})}, \nonumber \\
     R_3(A) &= \frac{A(\{\{1,2,3,4\},\{2,3\}, \{2,4\},\{3,4\},\{1\},\{1\}, \emptyset\})}{A(\{\{2,3,4\},\{1,2\},\{1,3\},\{1,4\},\{2\},\{3\},\{4\}\})}, \label{2026-06-19-Ri}
\end{align}
over all $4 \times 4$ positive definite matrices $A$.

By dividing appropriate rows and columns in the numerators and the denominators by the square root of the positive entries $a_{11}, a_{22}, a_{33}, a_{44}$, one can assume that $A$ is a correlation matrix, that is
\begin{align}
\label{2026-04-02-assum}
    a_{11} = a_{22} = a_{33} = a_{44} = 1.
\end{align}
The positive definiteness implies that $a_{ij} \in (-1, 1)$ for all $1 \le i < j \le 4$. By multiplying rows and corresponding columns in the numerator and the denominator by $-1$, if necessary, we can further assume that
\begin{align}
\label{2026-06-09-assumption}
    a_{12}, a_{13}, a_{14} \in [0, 1).
\end{align}
An important observation is that both ratios $R_2(A)$ and $R_3(A)$ are symmetric with respect to the indices $\{2, 3, 4\}$. That is, the value of the ratio is invariant under any permutation of the set $\{2, 3, 4\}$.

To simplify the presentation, we adopt the notation $\Delta_\alpha := \det A[\alpha]$ for any index set $\alpha \subseteq \{1, 2, 3, 4\}$. For specific index sets, we simply list the indices as subscripts, e.g., $\Delta_{ij} := \det A[\{i,j\}]$. Under assumption \eqref{2026-04-02-assum}, the ratios take the following form:
\begin{equation}
\label{eq:R3_delta}
    R_2(A) = \frac{\Delta_{1234} \Delta_{1234} \Delta_{23} \Delta_{24} \Delta_{34}}{\Delta_{123} \Delta_{124} \Delta_{134} \Delta_{234}} \quad \mbox{and} \quad
    R_3(A) = \frac{\Delta_{1234} \Delta_{23} \Delta_{24} \Delta_{34}}{\Delta_{234} \Delta_{12} \Delta_{13} \Delta_{14}}.
\end{equation}

Our strategy is to look for the supremums of the ratios using optimization techniques. The domain over which we are optimizing is 
\begin{align}
\label{2026-04-09-D}
D:=\{A \in \mathbb{R}^{4 \times 4}: A \succ 0, \mbox{diag\,}(A) = (1,1,1,1)\}.
\end{align}
In the remainder of the paper, we prove the following result.

\begin{theorem} \label{thm:R2}
We have $R_2(A) \le 1$ and $R_3(A) \le 1$ for all $4 \times 4$ positive definite matrices $A$.
\end{theorem}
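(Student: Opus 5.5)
The plan is to take the Schur complement with respect to the index $1$, which separates the first row from the $3\times 3$ correlation block on $\{2,3,4\}$. Write
$$
A=\begin{pmatrix}1 & b^T\\ b & C\end{pmatrix},\qquad b=(b_2,b_3,b_4)^T=(a_{12},a_{13},a_{14})^T\in[0,1)^3,\quad C=A[\{2,3,4\}].
$$
For every $\alpha\subseteq\{2,3,4\}$ one has $\Delta_{\{1\}\cup\alpha}=\Delta_\alpha\bigl(1-q_\alpha\bigr)$, where $q_\alpha:=b_\alpha^TC[\alpha]^{-1}b_\alpha$. Positive definiteness of $A$ is equivalent to $C\succ0$ together with $q:=q_{234}<1$. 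Substituting into \eqref{eq:R3_delta}, the ratios become
$$
R_2(A)=\frac{(1-q)^2\,\Delta_{234}}{(1-q_{23})(1-q_{24})(1-q_{34})},\qquad
R_3(A)=\frac{(1-q)\,\Delta_{23}\Delta_{24}\Delta_{34}}{(1-b_2^2)(1-b_3^2)(1-b_4^2)} .
$$
The point of this reduction is that, for fixed $C$, both ratios are explicit functions of $b$ on the ellipsoid $\{b^TC^{-1}b<1\}$. So I would first fix $C$ and maximize over $b$, and only then optimize over $C$.

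For each fixed $C$ the maximization over $b$ proceeds as follows.
\begin{enumerate}
\item On the boundary $q\to1$ both numerators vanish. Hence the supremum over $b$ is either attained at an interior critical point or approached as $C$ degenerates.
\item Compute the gradient in $b$. The terms $1-q_{ij}$ and $1-b_i^2$ are rational in $b$ and $C$, so the Lagrange/critical-point equations are polynomial.
\item Check $b=0$. There $R_3=\Delta_{23}\Delta_{24}\Delta_{34}\le1$ trivially. Also $R_2=\Delta_{234}$, and $\Delta_{234}\le1$ by Hadamard's inequality.
\item Show that moving $b$ away from $0$ can increase the ratio only in a controlled way.
\end{enumerate}
For $R_3$, I would try to show that $\log(1-q)-\sum_i\log(1-b_i^2)$ is maximized along a ray, which reduces the problem to one scalar parameter $t$ with $b=t u$. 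For $R_2$ I would do the same. In both cases I would use the symmetry of the ratios in $\{2,3,4\}$ to guess that, after the reductions, the extremal configurations satisfy $b_2=b_3=b_4$ and $a_{23}=a_{24}=a_{34}$. I would verify the inequality on that symmetric two-parameter family directly, where it should become a one- or two-variable polynomial inequality.

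The main obstacle is passing from the symmetric family to the general case: justifying that no asymmetric critical point exceeds $1$. My first attempt would be to clear denominators and write $1-R_i$, for $i=2,3$, times the (positive) denominator as a polynomial $P_i$ in the six off-diagonal entries. I would then seek a certificate that $P_i\ge0$ on $D$, namely a sum-of-squares decomposition plus multiples of the positive quantities $\Delta_\alpha$ and $1-q$.

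If that fails, I would fall back on a case analysis. I would order $b_2\ge b_3\ge b_4$ by symmetry and eliminate one variable via the critical-point equation in $a_{34}$, which enters quadratically. The remaining variables would be handled by monotonicity arguments on the resulting resultant polynomial.

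Finally, I would exhibit a sequence of positive definite matrices, for instance $A\to I$ or a suitable degenerate limit, showing that the bound $1$ is approached. This establishes that $1$ is the supremum, as conjectured in \cite{TracyHall:2008}.
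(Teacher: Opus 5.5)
\textbf{There is a genuine gap.} Your Schur-complement reduction is correct, and your skeleton (interior critical points versus degenerations) is the same as the paper's. However, neither half of the actual argument is supplied, and Step 1 is wrong.

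\emph{Step 1 fails.} When $q\to1$ the denominators can vanish as well, because $b_j^2\le q_{jk}\le q$. For example, take $C=I_3$ and $b=(t,0,0)$ with $t\to1$. Then $q=q_{23}=q_{24}=b_2^2=t^2$, and $R_2\equiv R_3\equiv 1$ all the way to the boundary of the ellipsoid. So vanishing numerators prove nothing. The boundary $q\to1$, and the degenerations of $C$ (which you never treat), need a genuine estimate. The paper obtains it from two Koteljanskii inequalities, which give
$R_i\le\min(\Delta_{23}\Delta_{24}/\Delta_{12},\,\Delta_{23}\Delta_{34}/\Delta_{13},\,\Delta_{24}\Delta_{34}/\Delta_{14})$.
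It combines this with a case analysis on the null vector of the limit matrix and a planar-angle lemma for rank at most $2$.

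\emph{The interior step, which is the heart of the problem, is never carried out.} Noting that the critical-point equations are polynomial does not bound anything. Neither does saying that moving $b$ away from $0$ changes the ratio ``in a controlled way'', or hoping for an SOS certificate or a resultant argument.

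Your symmetric ansatz $b_2=b_3=b_4$, $a_{23}=a_{24}=a_{34}$ is unjustified: invariance of $R_i$ under permutations of $\{2,3,4\}$ does not make its maximizers permutation-invariant. Here they are not. Both $R_2$ and $R_3$ equal $1$ on the whole two-parameter family $a_{14}=a_{24}=a_{34}=0$, $a_{23}=a_{12}a_{13}$ (and its permuted copies), which meets your symmetric family only at $A=I$.

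The same family shows that the control in Step 4 must be exactly sharp. Take $C$ with $a_{23}=\rho\neq0$ and $a_{24}=a_{34}=0$. At $b=0$ you get $R_i=1-\rho^2$, but at $b=(b_2,b_3,0)$ with $b_2b_3=\rho$ you get $R_i=1$. Any polynomial certificate would therefore have to be tight on a positive-dimensional set.

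\emph{The missing idea} is a way to solve the critical-point system completely. The paper differentiates $\ln R_i$ with Jacobi's formula and rewrites each inverse principal submatrix through the Schur complement in $B=A^{-1}$, e.g.\ $A[\{1,2,3\}]^{-1}_{12}=b_{12}-b_{14}b_{24}/b_{44}$.

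For $R_2$, this cancels the $b_{12}$ terms and yields rigid equations such as $b_{14}b_{24}b_{33}+b_{13}b_{23}b_{44}=0$ (and its permutations), which force $b_{13}b_{14}b_{34}=0$. For $R_3$, the equations give $b_{1j}=-a_{1j}/(1-a_{1j}^2)$. The identity $AB=I$ then becomes a homogeneous linear system in $a_{12}a_{13}-a_{23}$, $a_{12}a_{14}-a_{24}$, $a_{13}a_{14}-a_{34}$.

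In both cases every interior critical point lands in the family above, where the value is exactly $1$. Without an argument of this kind, or a completed certificate, your proposal does not establish the inequality.
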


\begin{proof}
Suppose on the contrary that
\(\sup_D R_i>1\), $i=2,3$. Choose a sequence \(\{A_n\}\subset  D\), such that
\(R_i(A_n)\) converges to \(\sup_D R_i\). Since the closure of \(D\) in the affine space
\(\{A \in \mathbb{R}^{4 \times 4}:  \operatorname{diag}(A)=(1,1,1,1)\}\) is compact, we can assume without loss of generality that $\{A_n\}$ converges to some
positive (semi) definite correlation matrix \(A_0\). 
If $A_0 \in D$, then it has to be a critical point and $R_i(A_0) = 1$ as shown by Theorem~\ref{prop:interior_critical}. 
(Section~\ref{2026-06-18-sect} is devoted to the proof of that theorem.) Otherwise \(A_0\in\partial D\) and then 
$\limsup_{n\to\infty}R_i(A_n)\le1,$ as shown in Section~\ref{2026-06-17-sect}. These contradictions conclude the proof.
\end{proof}

To avoid cumbersome indexing notation, we index the entries of any principal submatrix $A[\alpha]$ and its inverse using the original global indices from the full matrix $A$. Thus, the $(i,j)$ entry of $A[\alpha]^{-1}$ corresponds to the original row $i$ and column $j$ of $A$, for $i, j \in \alpha$. For example, if $\alpha = \{2, 3\}$, the top-right entry of the $2 \times 2$ inverse matrix $A[\alpha]^{-1}$ is denoted as $(A[\alpha]^{-1})_{23}$ instead of the conventional $(A[\alpha]^{-1})_{12}$.

The following result, see \cite[Theorem 6.1]{JuarezRuiz:2016}, is useful in the sequel. 

\begin{theorem}
\label{lem:submatrix_inv}
Let $M$ be a non-singular matrix partitioned as
\begin{equation}
    M = \begin{pmatrix} P & Q \\ R & S \end{pmatrix} \mbox{ and let }     M^{-1} = \begin{pmatrix} \tilde{P} & \tilde{Q} \\ \tilde{R} & \tilde{S} \end{pmatrix}
\end{equation}
be its inverse partitioned conformably.
If the block $\tilde{S}$ is non-singular, then the principal submatrix $P$ is also non-singular, and its inverse is given by the Schur's complement of $\tilde{S}$ in $M^{-1}$, namely
\begin{equation}
    P^{-1} = \tilde{P} - \tilde{Q}\tilde{S}^{-1}\tilde{R}.
\end{equation}
\end{theorem}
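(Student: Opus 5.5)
The plan is to read the identity off directly from the block equations encoded in $MM^{-1}=I$, without computing any determinants. Let $P$ be $k\times k$ and $S$ be $(n-k)\times(n-k)$. Then $\tilde P$ is $k\times k$, $\tilde S$ is $(n-k)\times(n-k)$, and $\tilde Q,\tilde R$ have the matching rectangular shapes. Multiplying $M$ by $M^{-1}$ blockwise and comparing with the identity, we keep only the first block row:
\begin{align*}
P\tilde P + Q\tilde R &= I_k, \\
P\tilde Q + Q\tilde S &= 0.
\end{align*}

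The key step uses the hypothesis that $\tilde S$ is non-singular. It lets us solve the second equation for the off-diagonal block of $M$:
\[
Q = -P\tilde Q\tilde S^{-1}.
\]
Substituting this into the first equation gives $P\tilde P - P\tilde Q\tilde S^{-1}\tilde R = I_k$. Factoring $P$ out on the left, this becomes
\[
P\bigl(\tilde P - \tilde Q\tilde S^{-1}\tilde R\bigr) = I_k .
\]

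Since $P$ and $\tilde P - \tilde Q\tilde S^{-1}\tilde R$ are square matrices of the same size, a one-sided inverse is a two-sided inverse. Hence $P$ is non-singular and $P^{-1}=\tilde P - \tilde Q\tilde S^{-1}\tilde R$, which proves both claims of Theorem~\ref{lem:submatrix_inv} at once.

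There is no real obstacle here. The only point needing care is the order of the factors, since the blocks do not commute: $\tilde S^{-1}$ must appear between $\tilde Q$ and $\tilde R$, and $P$ must be factored out on the left. As an alternative route to the non-singularity of $P$ alone, one could use Jacobi's complementary minor identity $\det P=\det M\cdot\det\tilde S$, but the direct computation above makes this unnecessary.
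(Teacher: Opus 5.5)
Your proof is correct and complete. The first-block-row equations of $MM^{-1}=I$ are right, and solving $P\tilde Q+Q\tilde S=0$ for $Q=-P\tilde Q\tilde S^{-1}$ uses exactly the hypothesis on $\tilde S$. The relation $P(\tilde P-\tilde Q\tilde S^{-1}\tilde R)=I_k$ between square factors then gives both the invertibility of $P$ and the formula.

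The paper does not prove this statement at all; it quotes it from Theorem 6.1 of Ju\'arez-Ruiz et al. Your derivation therefore supplies a self-contained proof where the paper has only a citation. The standard textbook route is different: apply the block-inversion (Banachiewicz) formula to $N=M^{-1}$, pivoting on the invertible block $\tilde S$. The $(1,1)$ block of $N^{-1}=M$ is then the inverse of the Schur complement $N/\tilde S=\tilde P-\tilde Q\tilde S^{-1}\tilde R$, and this complement is invertible because $\det N=\det\tilde S\,\det(N/\tilde S)\neq 0$. That route presupposes the full block-inverse formula together with the determinant factorization. Yours needs only one block row of $MM^{-1}=I$ and the fact that a one-sided inverse of a square matrix is two-sided, so it is shorter and more elementary.

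Your side remark is also correct: Jacobi's complementary minor identity gives $\det P=\det M\,\det\tilde S$, which independently yields the invertibility of $P$. As you note, it is unnecessary here.
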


\section{Critical points in the relative interior of the domain}
\label{2026-06-18-sect}

The goal of this section is to prove the following theorem.

\begin{theorem} 
\label{prop:interior_critical}
The critical points of the determinantal ratios $R_2(A)$ and $R_3(A)$, in the relative interior of $D$, satisfy  \begin{align*}
a_{14}&=a_{24}=a_{34}=0 \mbox{ and } a_{23}=a_{12}a_{13},  \mbox{ or} \\
a_{12}&=a_{24}=a_{23}=0 \mbox{ and } a_{34}=a_{13}a_{14},  \mbox{ or} \\
a_{13}&=a_{23}=a_{34}=0 \mbox{ and } a_{24}=a_{12}a_{14}. 
\end{align*}
Both ratios evaluate to $1$ on the critical points.
\end{theorem}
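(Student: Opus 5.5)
The plan is to work with $\log R_i$ as a function of the six off-diagonal entries $a_{ij}$, $1\le i<j\le 4$, on the relative interior of $D$. The key tool is the standard derivative formula for a log-determinant:
\[
\frac{\partial}{\partial a_{ij}}\log\Delta_\alpha = 2\,(A[\alpha]^{-1})_{ij}
\]
whenever $i,j\in\alpha$, and zero otherwise. The entry is varied symmetrically, and we use the global-index convention. The critical point equations for $\log R_2$ then read, for each pair $i<j$,
\[
2(A^{-1})_{ij} + \sum_{\substack{\beta\in\{23,24,34\}\\ i,j\in\beta}} (A[\beta]^{-1})_{ij} \;=\; \sum_{\substack{|\gamma|=3\\ i,j\in\gamma}} (A[\gamma]^{-1})_{ij}.
\]
The analogous system holds for $R_3$, with the terms coming from \eqref{eq:R3_delta}. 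Pairs containing the index $1$ never appear in the $2\times 2$ minors, which makes those three equations simpler.

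The next step is to express every $(A[\gamma]^{-1})_{ij}$ for $|\gamma|=3$ through the entries of $B:=A^{-1}$, using Theorem~\ref{lem:submatrix_inv} with $\tilde S=b_{kk}$ where $\gamma=\{1,2,3,4\}\setminus\{k\}$:
\[
(A[\gamma]^{-1})_{ij}=b_{ij}-\frac{b_{ik}b_{kj}}{b_{kk}}.
\]
With this substitution, the equations for the pairs $\{1,j\}$ in $R_2$ involve only $B$. For example, the pair $\{1,2\}$ gives
\[
2b_{12}=3b_{12}-\frac{b_{13}b_{32}}{b_{33}}-\frac{b_{14}b_{42}}{b_{44}},
\]
that is, $b_{12}=b_{13}b_{32}/b_{33}+b_{14}b_{42}/b_{44}$, and cyclically for the pairs $\{1,3\}$ and $\{1,4\}$. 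For the pairs inside $\{2,3,4\}$, the $2\times 2$ term is explicitly $(A[\{i,j\}]^{-1})_{ij}=-a_{ij}/(1-a_{ij}^2)$. I would parametrize everything by the partial correlations, i.e.\ normalize $B$ to $\hat b_{ij}=b_{ij}/\sqrt{b_{ii}b_{jj}}$, so that the three $\{1,j\}$-equations become a homogeneous system
\[
\hat b_{12}=\hat b_{13}\hat b_{32}+\hat b_{14}\hat b_{42},
\]
and its cyclic versions. This is linear in $(\hat b_{12},\hat b_{13},\hat b_{14})$ with coefficient matrix $I-C$, where $C$ is the (zero-diagonal) matrix of the $\hat b_{jk}$, $j,k\in\{2,3,4\}$. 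Positive definiteness should force $I-C$ to be either invertible or singular only in a controlled way. If $I-C$ is invertible, we get $b_{12}=b_{13}=b_{14}=0$, i.e.\ $a_{1j}=0$. Otherwise the kernel is constrained, and this is where the three families in the statement should emerge. In those families, vertex $1$ and one index $k$ are joined while $k$ is separated from the rest, so that $A$ becomes block-diagonal after the condition $a_{23}=a_{12}a_{13}$, i.e.\ $b_{23}=0$, is imposed. The same reduction will be carried out for $R_3$, whose $\{1,j\}$-equations have the shape $b_{1j}$ equals a combination of the $2\times2$ inverse entries $-a_{1j}/(1-a_{1j}^2)$.

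The remaining equations, for the pairs in $\{2,3,4\}$, mix $A$-entries (from the $2\times2$ minors) with $B$-entries. Here I would split into cases according to which of the $b_{1j}$ (equivalently, partial correlations) vanish. In each case I would solve explicitly, using the symmetry in $\{2,3,4\}$ and the sign normalization \eqref{2026-06-09-assumption} to reduce the case count. The case $a_{12}=a_{13}=a_{14}=0$ needs checking separately: there $R_2$ reduces to a ratio on the $3\times3$ block $\{2,3,4\}$, which should have critical points only when two of the three entries $a_{23},a_{24},a_{34}$ vanish, and these are degenerate members of the listed families. Finally, on each family the matrix decomposes as a direct sum of a $3\times3$ block $\{1,j,k\}$ with $a_{jk}=a_{1j}a_{1k}$ and the $1\times1$ block $\{l\}$. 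Then $\Delta$ factors, and the ratio becomes $1$ by direct substitution: $\Delta_{1jk}=(1-a_{1j}^2)(1-a_{1k}^2)$, which cancels against the $2\times2$ factors.

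The main obstacle is the case analysis showing that the coupled nonlinear system has no solutions other than the three families. This is especially hard in the singular case of $I-C$, and for the mixed $\{2,3,4\}$ equations where $A$- and $B$-entries interact. I would first try to eliminate variables with a resultant or Gröbner-basis computation in the partial-correlation coordinates, and then use positive definiteness to discard the spurious real branches.
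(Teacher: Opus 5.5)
There is a genuine gap, and it begins with a miscount in your first equations. Only two $3$-element index sets contain $\{1,2\}$, namely $\{1,2,3\}$ and $\{1,2,4\}$, since $\Delta_{134}$ and $\Delta_{234}$ do not involve $a_{12}$. So the right-hand side of your $\{1,2\}$-equation is $2b_{12}-b_{13}b_{23}/b_{33}-b_{14}b_{24}/b_{44}$, not $3b_{12}-\cdots$. The correct equation is
\[
\frac{b_{13}b_{23}}{b_{33}}+\frac{b_{14}b_{24}}{b_{44}}=0 .
\]
In your normalized coordinates this reads $C\hat b_1=0$, not $(I-C)\hat b_1=0$, where $\hat b_1=(\hat b_{12},\hat b_{13},\hat b_{14})^T$.

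Your version contradicts the statement you are trying to prove. On the first family ($a_{14}=a_{24}=a_{34}=0$, $a_{23}=a_{12}a_{13}$) one has $b_{23}=b_{24}=b_{34}=0$, so $C=0$ and $I-C=I$ is invertible. Your system would then force $b_{12}=0$, whereas in fact $b_{12}=-a_{12}/(1-a_{12}^2)\neq 0$. So the families cannot ``emerge from the singular case of $I-C$''; your case split would discard exactly the critical points you need.

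With the corrected matrix your linear-algebra framing is salvageable: $\det C=2\hat b_{23}\hat b_{24}\hat b_{34}$, so either $b_{12}=b_{13}=b_{14}=0$ or one of $b_{23},b_{24},b_{34}$ vanishes. But the substance of the proof is the case analysis combining this with the $\{2,3,4\}$-equations and positive definiteness. You defer that to an unspecified resultant or Gr\"obner computation, so the classification is not established. The paper does it by hand:
\begin{enumerate}
\item Eliminating among the three $\{1,j\}$-equations gives $b_{13}b_{14}b_{34}=b_{12}b_{14}b_{24}=b_{12}b_{13}b_{23}=0$.
\item If all $b_{1j}\neq0$, then $b_{23}=b_{24}=b_{34}=0$. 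The entries of $AB=I$ give $a_{jk}=a_{1j}a_{1k}$ and, via $(AB)_{11}=1$, one formula for $b_{11}$. The $\{2,3\}$-equation gives a second formula. The two differ by $a_{14}^2/(1-a_{14}^2)$, forcing $a_{14}=0$, a contradiction.
\item If $b_{14}=0$, then either $b_{23}=0$, which through the $\{2,4\}$- and $\{3,4\}$-equations yields the first family, or $b_{12}=b_{13}=0$.
\end{enumerate}

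For $R_3$ you are missing the key step. Insert $b_{1j}=-a_{1j}/(1-a_{1j}^2)=:-x_j$ into the first column of $AB=I$. This produces a homogeneous linear system in $U=a_{12}a_{13}-a_{23}$, $V=a_{12}a_{14}-a_{24}$, $W=a_{13}a_{14}-a_{34}$ with determinant $-2x_2x_3x_4$, which drives the case analysis.

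Finally, your claim about the face $a_{12}=a_{13}=a_{14}=0$ is inaccurate. There $R_2=\Delta_{234}$, whose only interior critical point is $a_{23}=a_{24}=a_{34}=0$. A point with exactly two of these zero and the third nonzero belongs to none of the listed families, so ``two of the three vanish'' is not the right condition.
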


\subsection{The critical points of $R_2$}

We start by finding the system of equations for the critical points of $R_2$ in the relative interior of the domain. For that we analyze the partial derivatives of the logarithm of the original ratio $R_2(A)$ 
\begin{align*}
    \ln R_2(A) = 2\ln \det A + \ln \Delta_{23} + \ln \Delta_{24} &+ \ln \Delta_{34}  \\
        & - \ln \Delta_{123} - \ln \Delta_{124} - \ln \Delta_{134} - \ln \Delta_{234}.
\end{align*}

Let $B = A^{-1}$ denote the inverse of the positive definite correlation matrix $A$. Note that 
$$
b_{ii} > 0, \mbox{ for all $i=1,2,3,4$}.
$$
By Jacobi's formula, the derivative of the log-determinant of a positive definite matrix $M$ with respect to an off-diagonal entry $m_{ij}$ is given by 
$$
\frac{\partial \ln \det M}{\partial m_{ij}} = 2M^{-1}_{ij}.
$$
Thus, taking the partial derivative of $\ln R_2(A)$ with respect to $a_{12}$, we observe that the terms $\Delta_{23}, \Delta_{24}, \Delta_{34}, \Delta_{134},$ and $\Delta_{234}$ vanish as they do not depend on $a_{12}$. This yields:
\begin{align}
\label{2026-04-02-dlnR2}
    \frac{\partial \ln R_2(A)}{\partial a_{12}} = 4 b_{12} - 2A[\{1,2,3\}]^{-1}_{12} - 2A[\{1,2,4\}]^{-1}_{12}.
\end{align}
To simplify the inverse submatrix entries, we utilize Theorem~\ref{lem:submatrix_inv}. Consider the partition of $A$ and its inverse $B$ obtained by isolating the fourth row and column:
\begin{equation*}
    A = \begin{pmatrix} A[\{1,2,3\}] & v \\ v^T & 1 \end{pmatrix}, \quad B = \begin{pmatrix} B[\{1,2,3\}] & u \\ u^T & b_{44} \end{pmatrix},
\end{equation*}
where $v = (a_{14}, a_{24}, a_{34})^T$ and $u = (b_{14}, b_{24}, b_{34})^T$. The inverse of the principal submatrix $A[\{1,2,3\}]$ can be expressed as the Schur's complement of $b_{44}$ in $B$:
\begin{align*}
    A[\{1,2,3\}]^{-1} = B[\{1,2,3\}] - \frac{u u^T}{b_{44}}.
\end{align*}
Extracting the $(1,2)$ entry from both sides of this matrix equation yields:
\begin{align*}
    A[\{1,2,3\}]^{-1}_{12} = b_{12} - \frac{b_{14}b_{24}}{b_{44}}.
\end{align*}
By applying identical partition logic to isolate the third row and column, we obtain the analogous expression for the submatrix on indices $\{1,2,4\}$:
\begin{align*}
    A[\{1,2,4\}]^{-1}_{12} = b_{12} - \frac{b_{13}b_{23}}{b_{33}}.
\end{align*}
Substituting these identities in \eqref{2026-04-02-dlnR2} cancels the $b_{12}$ terms, leaving:
\begin{align*} 
    \frac{\partial \ln R_2(A)}{\partial a_{12}} =    2\left(\frac{b_{14}b_{24}}{b_{44}} + \frac{b_{13}b_{23}}{b_{33}}\right).
\end{align*}
By the permutation symmetry of the indices $\{2, 3, 4\}$, differentiating with respect to $a_{13}$ and $a_{14}$ yields two analogous equations. Clearing the positive denominators, we obtain the first three equations for the critical points:
\begin{align}
    b_{14}b_{24}b_{33} + b_{13}b_{23}b_{44} &= 0, \label{eq:R2_sys1} \\
    b_{14}b_{34}b_{22} + b_{12}b_{23}b_{44} &= 0, \label{eq:R2_sys2} \\
    b_{12}b_{24}b_{33} + b_{13}b_{34}b_{22}&= 0. \label{eq:R2_sys3}
\end{align}

Next, we consider the partial derivative with respect to $a_{23}$:
\begin{align}
\label{2026-04-02-dlnR223}
    \frac{\partial \ln R_2(A)}{\partial a_{23}} = 4 b_{23} + 2A[\{2,3\}]^{-1}_{23} - 2A[\{1,2,3\}]^{-1}_{23} - 2A[\{2,3,4\}]^{-1}_{23}.
\end{align}
Applying Theorem~\ref{lem:submatrix_inv} again to the $3 \times 3$ submatrices gives:
\begin{align*}
    A[\{1,2,3\}]^{-1}_{23} = b_{23} - \frac{b_{24}b_{34}}{b_{44}} \quad \mbox{and}  \quad
    A[\{2,3,4\}]^{-1}_{23} = b_{23} - \frac{b_{12}b_{13}}{b_{11}}.
\end{align*}
Substituting these into \eqref{2026-04-02-dlnR223} cancels the $4 b_{23}$ terms. Noting that the off-diagonal entry of the $2 \times 2$ inverse matrix is $A[\{2,3\}]^{-1}_{23} = -a_{23}/(1-a_{23}^2)$, we obtain:
\begin{align*}
  \frac{\partial \ln R_2(A)}{\partial a_{23}} =  2\left(\frac{-a_{23}}{1-a_{23}^2} + \frac{b_{24}b_{34}}{b_{44}} + \frac{b_{12}b_{13}}{b_{11}}\right).
\end{align*}
By symmetry, differentiating with respect to $a_{24}$ and $a_{34}$ provides the remaining two equations, completing the full system of six conditions for the critical points:
\begin{align}
    \frac{a_{23}}{1-a_{23}^2} &= \frac{b_{24}b_{34}}{b_{44}} + \frac{b_{12}b_{13}}{b_{11}}, \label{eq:R2_sys4} \\
    \frac{a_{24}}{1-a_{24}^2} &= \frac{b_{23}b_{34}}{b_{33}} + \frac{b_{12}b_{14}}{b_{11}}, \label{eq:R2_sys5} \\
    \frac{a_{34}}{1-a_{34}^2} &= \frac{b_{23}b_{24}}{b_{22}} + \frac{b_{13}b_{14}}{b_{11}}. \label{eq:R2_sys6}
\end{align}

We now proceed to solving the system \eqref{eq:R2_sys1}--\eqref{eq:R2_sys3}, \eqref{eq:R2_sys4}--\eqref{eq:R2_sys6}.  Rewrite \eqref{eq:R2_sys3} as $b_{12}b_{24}b_{33} = - b_{13}b_{34}b_{22}$. Multiply equation \eqref{eq:R2_sys1} by $b_{12}$ and substitute this relation to get:
\begin{equation}
    b_{14}(- b_{13}b_{34}b_{22}) + b_{13}(b_{12}b_{23}b_{44}) = 0.
\end{equation}
From equation \eqref{eq:R2_sys2}, we know $b_{12}b_{23}b_{44} = - b_{14}b_{34}b_{22}$. Substituting this into the second term gives $-2 b_{13} b_{14} b_{34} b_{22} = 0$. Since $b_{22} > 0$, we obtain $b_{13} b_{14} b_{34} = 0$. The permutation symmetry with respect of the indices $\{2,3,4\}$, yields two additional constraints: $b_{12} b_{14} b_{24} = 0$ and $b_{12} b_{13} b_{23} = 0$.

{\bf Case 1.} Suppose all entries $b_{12}, b_{13}, b_{14}$ are non-zero. This forces $b_{23} = b_{24} = b_{34} = 0$. Consequently, the off-diagonal entries of $B$ are non-zero only along the first row and column. The identity $AB = I_4$ dictates that the off-diagonal entries of the product must vanish. Specifically, for $j \in \{2, 3, 4\}$, the condition $(AB)_{1j} = 0$ together with $a_{11} = 1$ yields 
\begin{align}
\label{2026-04-02-b1j}
b_{1j} = -a_{1j}b_{jj}, \mbox{ for } j \in \{2, 3, 4\}.
\end{align}
Furthermore, for distinct $i, j \in \{2, 3, 4\}$, the condition $(AB)_{ij} = 0$ expands to $a_{i1}b_{1j} + a_{ij}b_{jj} = 0$. Substituting $b_{1j}$ into this equation gives $-a_{1i}a_{1j}b_{jj} + a_{ij}b_{jj} = 0$. Dividing by $b_{jj}$ implies that the entries of $A$ must satisfy the relations 
$$
a_{ij} = a_{1i}a_{1j}, \mbox{ for all $2 \le i < j \le 4$}.
$$

Next, for $j \in \{2, 3, 4\}$, the condition $(AB)_{jj} = 1$ expands to $a_{j1}b_{1j} + a_{jj}b_{jj} = 1$. Substituting $b_{1j} = -a_{1j}b_{jj}$ and $a_{jj} = 1$, we obtain $-a_{1j}^2 b_{jj} + b_{jj} = 1$, which yields:
\begin{equation} \label{eq:b_jj}
    b_{jj} = \frac{1}{1-a_{1j}^2}, \quad \text{for } j \in \{2, 3, 4\}.
\end{equation}
Similarly, evaluating $(AB)_{11} = 1$ gives $b_{11} + a_{12}b_{12} + a_{13}b_{13} + a_{14}b_{14} = 1$. Substituting $b_{1j}$ and the expressions for $b_{jj}$ from \eqref{eq:b_jj}, we derive a representation for $b_{11}$:
\begin{equation} \label{eq:b11_sum}
    b_{11} = 1 + \frac{a_{12}^2}{1-a_{12}^2} + \frac{a_{13}^2}{1-a_{13}^2} + \frac{a_{14}^2}{1-a_{14}^2}.
\end{equation}

Now, we substitute the established relations into the partial derivative equations. Consider equation \eqref{eq:R2_sys4}. Because $b_{24} = b_{34} = 0$, the first term on the right-hand side vanishes. Using \eqref{2026-04-02-b1j} and $a_{23} = a_{12}a_{13}$, transforms the equation into:
\begin{equation} \label{eq:R2_sys4_sub}
    \frac{a_{12}a_{13}}{1-a_{12}^2a_{13}^2} = \frac{(-a_{12}b_{22})(-a_{13}b_{33})}{b_{11}} = \frac{a_{12}a_{13}b_{22}b_{33}}{b_{11}}.
\end{equation}
Under the assumption in this case $b_{1j} \neq 0$, so the elements $a_{12}$ and $a_{13}$ are non-zero. Canceling $a_{12}a_{13}$ from both sides, solving for $b_{11}$, and substituting $b_{22}$ and $b_{33}$ from \eqref{eq:b_jj}, gives a second representation for $b_{11}$:
\begin{equation} \label{eq:b11_prod}
    b_{11} = \frac{1-a_{12}^2a_{13}^2}{(1-a_{12}^2)(1-a_{13}^2)} = 1 + \frac{a_{12}^2}{1-a_{12}^2} + \frac{a_{13}^2}{1-a_{13}^2}.
\end{equation}
Equating the two expressions for $b_{11}$ from \eqref{eq:b11_sum} and \eqref{eq:b11_prod} forces $a_{14}^2/(1-a_{14}^2) = 0$, which dictates $a_{14} = 0$. By \eqref{2026-04-02-b1j}, we have $b_{14}=0$, which contradicts the assumption in this case, showing that this case is not possible.

{\bf Case 2.} Suppose at least one of the entries $b_{12}, b_{13}, b_{14}$ is zero.
Without loss of generality, due to index symmetry, assume $b_{14} = 0$. Substituting $b_{14} = 0$  into  equations \eqref{eq:R2_sys1}--\eqref{eq:R2_sys3}, reduces them to:
\begin{align}
    b_{13}b_{23}b_{44} &= 0, \nonumber \\
    b_{12}b_{23}b_{44} &= 0, \nonumber \\
    b_{12}b_{24}b_{33} + b_{13}b_{34}b_{22} &= 0. \label{2026-04-03-eqn}
\end{align}
Since the diagonal entries $b_{44}$ and $b_{22}$ are strictly positive, the first two equations imply that either $b_{23} = 0$, or $b_{12} = b_{13} = 0$.

{\bf Case 2a.} If $b_{23} = 0$, we substitute $b_{14}=0$ and $b_{23}=0$ into equations \eqref{eq:R2_sys5} and \eqref{eq:R2_sys6}. The right-hand sides vanish completely, yielding $a_{24} = 0$ and $a_{34} = 0$. With $a_{24} = a_{34} = 0$, the entry $b_{14}$ of the inverse matrix evaluates strictly to $-a_{14}(1-a_{23}^2) / \det A$. Since $A$ is positive definite, the principal minor satisfies $(1-a_{23}^2) > 0$. Thus, the constraint $b_{14} = 0$ implies $a_{14} = 0$. In a similar way, the constraint $b_{23} = 0$ implies the condition $a_{23} = a_{12}a_{13}$. Let us see now that equation \eqref{eq:R2_sys4} holds. Indeed, the conditions $a_{14}=a_{24}=a_{34}=0$ imply that $b_{24}=b_{34}=0$ and $b_{44}=1$. This verifies equation \eqref{2026-04-03-eqn}. With $a_{23} = a_{12}a_{13}$, the remaining entries of the inverse matrix $B$ are:
\begin{equation}
    b_{11} = \frac{1-a_{12}^2a_{13}^2}{(1-a_{12}^2)(1-a_{13}^2)}, \quad b_{12} = \frac{-a_{12}}{1-a_{12}^2}, \quad b_{13} = \frac{-a_{13}}{1-a_{13}^2}.
\end{equation}
Substituting these into the right-hand side of \eqref{eq:R2_sys4} verifies its validity. We are free to choose any values of  $a_{12}$ and $a_{13}$ that maintain positive definiteness. Thus, this case gives the first solution in Theorem~\ref{prop:interior_critical}. The rest are derived by the symmetry.

{\bf Case 2b.} If $b_{12} = b_{13} = 0$, then together with $b_{14} = 0$, the first row and column of the inverse matrix $B$ contain zeros off the diagonal. This implies that $a_{12} = a_{13} = a_{14} = 0$.
Equations \eqref{eq:R2_sys1}--\eqref{eq:R2_sys3} are automatically satisfied. Under these conditions, the entries of $B$ in terms of those of $A$ are $b_{11}=1$ and
\begin{align*}
b_{22} &= \frac{1-a_{34}^2}{\det A}, b_{23}= -\frac{a_{23}-a_{24}a_{34}}{\det A}, b_{24}= -\frac{a_{24}-a_{23}a_{34}}{\det A}, \\
b_{33} &=  \frac{1-a_{24}^2}{\det A}, b_{34} = -\frac{a_{34}-a_{23}a_{24}}{\det A}, 
b_{44}= \frac{1-a_{23}^2}{\det A}.
\end{align*}
Substituting into equations \eqref{eq:R2_sys4}--\eqref{eq:R2_sys6} and simplifying gives 
$$
a_{24}a_{34} = a_{23}, \,\,\, a_{23}a_{34} = a_{24}, \,\,\, a_{23}a_{24} = a_{34}. 
$$
The only solution of this system that gives a positive definite $A$ is $a_{23}=a_{24}=a_{34}=0.$
This is a special case of the first solution in Theorem~\ref{prop:interior_critical}.

It is straightforward to verify that when $a_{14}=a_{24}=a_{34}=0$ and $a_{23} = a_{12}a_{13}$, we have $R_2(A)=1$. Consequently, to establish the global inequality $R_2(A) \le 1$, it remains to verify that the ratio does not strictly exceed $1$ as the matrix approaches the boundary of the domain.

\subsection{The critical points of $R_3$} 

We begin by characterizing the critical points of $R_3$ in the relative interior of its domain.  We look at the partial derivatives of the logarithm of $R_3(A)$:
\begin{equation*}
    \ln R_3(A) = \ln \det A + \ln \Delta_{23} + \ln \Delta_{24} + \ln \Delta_{34} - \ln \Delta_{234} - \ln \Delta_{12} - \ln \Delta_{13} - \ln \Delta_{14}.
\end{equation*}
Taking the partial derivative with respect to $a_{12}$, we observe that the terms involving $\Delta_{23}, \Delta_{24}, \Delta_{34}, \Delta_{234}, \Delta_{13},$ and $\Delta_{14}$ vanish. This yields:
\begin{equation*}
    \frac{\partial \ln R_3(A)}{\partial a_{12}} = 2 b_{12} - 2A[\{1,2\}]^{-1}_{12}.
\end{equation*}
For the $2 \times 2$ principal submatrix $A[\{1,2\}]$, its inverse can be explicitly computed. Extracting the off-diagonal entry gives $A[\{1,2\}]^{-1}_{12} = -a_{12}/(1-a_{12}^2)$. Substituting this into the derivative equation yields:
\begin{equation} \label{eq:R3_sys1}
    b_{12} + \frac{a_{12}}{1-a_{12}^2} = 0.
\end{equation}
By the permutation symmetry of the indices $\{2, 3, 4\}$, differentiating with respect to $a_{13}$ and $a_{14}$ yields two analogous equations:
\begin{align}
    b_{13} + \frac{a_{13}}{1-a_{13}^2} &= 0, \label{eq:R3_sys2} \\
    b_{14} + \frac{a_{14}}{1-a_{14}^2} &= 0. \label{eq:R3_sys3}
\end{align}

Next, we consider the partial derivative with respect to $a_{23}$:
\begin{equation*}
    \frac{\partial \ln R_3(A)}{\partial a_{23}} = 2 b_{23} + 2A[\{2,3\}]^{-1}_{23} - 2A[\{2,3,4\}]^{-1}_{23}.
\end{equation*}

To simplify the inverse submatrix entries, consider the partition of $A$ and its inverse $B = A^{-1}$ isolating the first row and column:
\begin{equation*}
    A = \begin{pmatrix} 1 & v^T \\ v & A[\{2,3,4\}] \end{pmatrix}, \quad B = \begin{pmatrix} b_{11} & u^T \\ u & B[\{2,3,4\}] \end{pmatrix},
\end{equation*}
where $v = (a_{12}, a_{13}, a_{14})^T$ and $u = (b_{12}, b_{13}, b_{14})^T$. By Theorem~\ref{lem:submatrix_inv}, the inverse of the principal submatrix $A[\{2,3,4\}]$ is the Schur's complement of $b_{11}$ in $B$:
\begin{equation*}
    A[\{2,3,4\}]^{-1} = B[\{2,3,4\}] - \frac{u u^T}{b_{11}}.
\end{equation*}
Extracting the entry corresponding to the indices $\{2,3\}$ from both sides of this matrix equation gives:
\begin{equation*}
    A[\{2,3,4\}]^{-1}_{23} = b_{23} - \frac{b_{12}b_{13}}{b_{11}}.
\end{equation*}
Additionally, the off-diagonal entry of the $2 \times 2$ inverse matrix is $A[\{2,3\}]^{-1}_{23} = -a_{23}/(1-a_{23}^2)$. Substituting these two identities into the derivative cancels the $2 b_{23}$ terms and after easy simplification, we obtain
\begin{equation} \label{eq:R3_sys4}
    \frac{b_{12}b_{13}}{b_{11}} - \frac{a_{23}}{1-a_{23}^2} = 0.
\end{equation}
By symmetry, differentiating with respect to $a_{24}$ and $a_{34}$ provides the remaining two equations, completing the full system of six equations:
\begin{align}
    \frac{b_{12}b_{14}}{b_{11}} - \frac{a_{24}}{1-a_{24}^2} &= 0, \label{eq:R3_sys5} \\
    \frac{b_{13}b_{14}}{b_{11}} - \frac{a_{34}}{1-a_{34}^2} &= 0. \label{eq:R3_sys6}
\end{align}

To solve this system, we look at the matrix identity $AB = I_4$. For brevity, we introduce the auxiliary variables $x_j = {a_{1j}}/({1-a_{1j}^2})$ for $j \in \{2, 3, 4\}$. From the first subsystem \eqref{eq:R3_sys1}--\eqref{eq:R3_sys3}, the off-diagonal entries in the first row of $B$ can be expressed as $b_{1j} = -x_j$. 

Evaluating the $(1,1)$ entry of the product $AB = I_4$ yields 
$$
a_{11}b_{11} + a_{12}b_{12} + a_{13}b_{13} + a_{14}b_{14} = 1.
$$ 
Substituting $a_{11} = 1$ and $b_{1j} = -x_j$ provides an explicit representation for $b_{11}$:
\begin{equation} \label{eq:b11_explicit}
    b_{11} = 1 + a_{12}x_2 + a_{13}x_3 + a_{14}x_4.
\end{equation}
Next, the $(2,1)$ entry of $AB = I_4$ is 
$$
a_{12}b_{11} + a_{22}b_{12} + a_{23}b_{13} + a_{24}b_{14} = 0.
$$
Substitute $a_{22}=1$ and $b_{j1} = -x_j$ to obtain:
\begin{equation*}
    a_{12}b_{11} - x_2 - a_{23}x_3 - a_{24}x_4 = 0.
\end{equation*}
Substituting the explicit formula for $b_{11}$ from \eqref{eq:b11_explicit} into this equation gives:
\begin{align*}
    a_{12}&(1 + a_{12}x_2 + a_{13}x_3 + a_{14}x_4) - x_2 - a_{23}x_3 - a_{24}x_4 \\
    &=  a_{12} - (1-a_{12}^2) x_2 + (a_{12}a_{13}-a_{23})x_3 + (a_{12}a_{14}-a_{24})x_4 \\
    &= (a_{12}a_{13}-a_{23})x_3 + (a_{12}a_{14}-a_{24})x_4 = 0,
\end{align*}
where we used the definition of $x_2$.
By the permutation symmetry of the indices $\{2, 3, 4\}$, evaluating the $(3,1)$ and $(4,1)$ entries of $AB = I_4$ generates two analogous equations. Defining $U := a_{12}a_{13}-a_{23}$, $V := a_{12}a_{14}-a_{24}$, and $W := a_{13}a_{14}-a_{34}$, we obtain a homogeneous linear system in terms of the `variables' $(U, V, W)$:
\begin{align}
\label{eq:homo3}
\begin{array}{rcrcrl}
    U x_3 & +  & V x_4  &      &            & = 0, \\
    U x_2  &    &             &  + & W x_4 &= 0, \\
               &    &  V x_2  & + & W x_3  &= 0. 
\end{array}
\end{align}
The determinant of this system is $-2 x_2 x_3 x_4$. 

{\bf Case 1.} Suppose the entries $a_{12}, a_{13}, a_{14}$ are all non-zero. Consequently, $x_2, x_3, x_4$ are non-zero, making the determinant strictly non-zero. This forces the homogeneous system to have only the trivial solution: $U = V = W = 0$. In particular, $U = 0$ implies $a_{23} = a_{12}a_{13}$.

However, substituting $a_{23} = a_{12}a_{13}$ back into the partial derivative equation \eqref{eq:R3_sys4} yields:
\begin{equation*}
 \frac{a_{12} a_{13}}{(1-a_{12}^2)(1-a_{13}^2) b_{11}} -   \frac{a_{12}a_{13}}{1-a_{12}^2 a_{13}^2} = 0.
\end{equation*}
Solving this for $b_{11}$ gives:
\begin{equation} \label{eq:b11_contradiction}
    b_{11} = \frac{1-a_{12}^2 a_{13}^2}{(1-a_{12}^2)(1-a_{13}^2)} = 1 + \frac{a_{12}^2}{1-a_{12}^2} + \frac{a_{13}^2}{1-a_{13}^2}.
\end{equation}
Comparing this with \eqref{eq:b11_explicit}, one sees that the equality holds if and only if $a_{14}x_4 = {a_{14}^2}/({1-a_{14}^2}) = 0$, which mandates $a_{14} = 0$. This contradiction shows that this case is not possible.

{\bf Case 2.} Suppose at least one of the  entries $a_{12}, a_{13}, a_{14}$ is zero.
Without loss of generality, due to the symmetry, assume $a_{14} = 0$, which implies $x_4 = 0$ and $b_{14} = 0$. 

Substituting $b_{14} = 0$ into the partial derivative equations \eqref{eq:R3_sys5} and \eqref{eq:R3_sys6} yields ${a_{24}}/({1-a_{24}^2}) = 0$ and ${a_{34}}/({1-a_{34}^2}) = 0$ implying that $a_{24} = 0$ and $a_{34} = 0$.  Next, substituting $x_4 = 0$ into the homogeneous system \eqref{eq:homo3} reduces the first equation to $U x_3 = 0$. 

{\bf Case 2a.} If both $a_{12}$ and $a_{13}$ are not zero, then $x_3 \neq 0$, which forces $U = 0$, that is $a_{23} = a_{12}a_{13}$.

{\bf Case 2b.} If $a_{12} = 0$, then $x_2 = 0$ and consequently $b_{12} = 0$. Substituting $b_{12} = 0$ into equation \eqref{eq:R3_sys4} eliminates the first term, leaving $-{a_{23}}/({1-a_{23}^2}) = 0$, which forces $a_{23} = 0$. The relationship $a_{23} = a_{12}a_{13}$ remains valid.

If $a_{13} = 0$, then we analogously conclude that $a_{23} = 0$ and the relationship $a_{23} = a_{12}a_{13}$ remains valid. 

This gives the first solution in Theorem~\ref{prop:interior_critical}. The rest are derived by the symmetry.

It is straightforward to verify that when $a_{14}=a_{24}=a_{34}=0$ and $a_{23} = a_{12}a_{13}$, we have $R_3(A)=1$. Consequently, to establish the global inequality $R_3(A) \le 1$, it remains to verify that the ratio does not  exceed $1$ as the matrix approaches the boundary of the domain.

\section{Boundary Analysis}
\label{2026-06-17-sect}

To prove the global inequalities $R_2(A) \le 1$ and $R_3(A) \le 1$, we need to analyze the behavior of the ratios as the correlation matrix $A$ approaches the boundary of the positive definite cone. Throughout this section, we assume that 
$$
A \to A_0 \in \partial D, 
$$
where $D$ is defined by \eqref{2026-04-09-D}. Since $A$ satisfies assumption \eqref{2026-06-09-assumption}, the limit $A_0$ satisfies it too.  Let $\lambda_1 \ge \cdots \ge \lambda_4 > 0$ be the eigenvalues of $A$ with corresponding orthonormal eigenvectors $u_1,\ldots, u_4$.  As $A \to A_0$, the eigenvalues $\lambda_i$ converge to the eigenvalues $\mu_i$ of $A_0$, and the eigenvectors $u_i :=(u_{1i}, u_{2i}, u_{3i}, u_{4i})^T$ can be chosen to converge to corresponding orthonormal eigenvectors $v_i :=(v_{1i}, v_{2i}, v_{3i}, v_{4i})^T$ of $A_0$, $i=1,\ldots, 4$.

The inverse of $A$ is given by 
$$
A^{-1} = \frac{1}{\lambda_4} u_4 u_4^T + \sum_{i=1}^3 \frac{1}{\lambda_i} u_i u_i^T =: \frac{1}{\lambda_4} u_4 u_4^T + M.
$$
By the identity $\text{adj}(A) = (\det A)A^{-1}$, we have $\text{adj}(A) = \lambda_1 \lambda_2 \lambda_3(u_4u_4^T + \lambda_4 M)$. Evaluating the diagonal entries of this matrix, which correspond to the $3 \times 3$ principal minors of $A$, gives:
\begin{align}
    \Delta_{234}(A) &= \lambda_1 \lambda_2 \lambda_3 (u_{14}^2 + \lambda_4 M_{11}), \quad \Delta_{134}(A) = \lambda_1 \lambda_2 \lambda_3 (u_{24}^2 + \lambda_4 M_{22}), \nonumber \\
    \Delta_{124}(A) &= \lambda_1 \lambda_2 \lambda_3 (u_{34}^2 + \lambda_4 M_{33}), \quad \Delta_{123}(A) = \lambda_1 \lambda_2 \lambda_3 (u_{44}^2 + \lambda_4 M_{44}). \label{eq:minor_spectral}
\end{align}

\subsection{Suppose $A_0$ has rank $3$} 
In this case, the eigenvalues of  $A_0$ are $\mu_1\ge \mu_2 \ge \mu_3 > \mu_4 = 0$ with $A_0 v_4 = 0$ and $\|v_4\| = 1$. 

\subsubsection{Boundary behaviour of $R_2$} 

Substituting \eqref{eq:minor_spectral} into $R_2(A)$ provides a path-independent representation of the ratio: 
\begin{align}
\label{2026-06-16-R2}
R_2(A) = \frac{\lambda_4^2 \Delta_{23}(A) \Delta_{24}(A) \Delta_{34}(A)}{(\lambda_1 \lambda_2 \lambda_3)^2 \prod_{i=1}^4 (u_{i4}^2 + \lambda_4 M_{ii})}.
\end{align}
We investigate the limit  of this ratio based on the sparsity of the null vector $v_4$. Note that $v_4$ cannot have three zero components. Indeed, if $v_{i4} \not=0$, then the $i$-th
row of $A_0 v_4 = 0$ evaluates to $v_{i4} = 0$, a contradiction.

{\bf Case 1.} \textit{Vector $v_4$ has no zero components.} In this case, the denominator of $R_2(A)$ approaches $(\mu_1 \mu_2 \mu_3)^2 v_{14}^2 \cdots v_{44}^2>0$. Since $\lambda_4^2 \to 0$, we conclude that $\lim_{A \to A_0} R_2(A) = 0$.

{\bf Case 2.} \textit{The vector \(v_4\) has exactly one zero component.}
Suppose \(v_{k4}=0\) and \(v_{i4}\ne0\) for \(i\ne k\). By definition,
\[
M_{kk}
=
\sum_{r=1}^3\frac{u_{kr}^2}{\lambda_r}
\to
\sum_{r=1}^3\frac{v_{kr}^2}{\mu_r} =: 2c > 0,
\]
since \(v_1,v_2,v_3,v_4\) form an orthonormal basis and \(v_{k4}=0\) implies
$\sum_{r=1}^3v_{kr}^2=1.$ Thus, for \(A\) sufficiently close to \(A_0\), we have
\[
u_{k4}^2+\lambda_4M_{kk}\ge c\lambda_4.
\]
The other factors \(u_{i4}^2+\lambda_4M_{ii}\) converge to
\(v_{i4}^2>0\), \(i\ne k\). Hence, by taking $c > 0$ smaller, if necessary, we have that for \(A\) sufficiently close to \(A_0\):
$$
(\lambda_1\lambda_2\lambda_3)^2
\prod_{i=1}^4(u_{i4}^2+\lambda_4M_{ii}) 
\ge c\lambda_4.
$$
Since every \(2\times2\) principal minor of a correlation matrix is at most \(1\),
using \eqref{2026-06-16-R2}, we get
\[
0\le R_2(A)
\le
\frac{\lambda_4^2}{c\lambda_4}
=
\frac{\lambda_4}{c}
\to0.
\]

{\bf Case 3a.} \textit{Vector $v_4$ has exactly two zero components, both from the index set $\{2, 3, 4\}$.}
Without loss of generality, assume $v_{34} = v_{44} = 0$ and $v_{14}, v_{24} \neq 0$. The condition $A_0 v_4 = 0$ applied to the first two rows yields $v_{14} + (A_{0})_{12}v_{24} = 0$ and $(A_{0})_{12}v_{14} + v_{24} = 0$. Since 
$(A_{0})_{12} \ge 0$, the system implies that $(A_{0})_{12} = 1$ and $v_{14} = -v_{24}$. Evaluating the third and fourth rows with $v_{14} = -v_{24}$ yields $(A_{0})_{13} = (A_{0})_{23}$ and $(A_{0})_{14} = (A_{0})_{24}$.

By Lemma~\ref{2026-06-12-lem:common_K_bound}, for the positive definite \(A\) we have
\[
R_2(A)\le 
\frac{\Delta_{23}(A)\Delta_{34}(A)}{\Delta_{13}(A)}.
\]

Since \((A_0)_{13}=(A_0)_{23}\), we have
\(\Delta_{13}(A_0)=\Delta_{23}(A_0)\). If \(\Delta_{13}(A_0)>0\), then 
\[
\limsup_{A\to A_0}R_2(A)
\le
\frac{\Delta_{23}(A_0)\Delta_{34}(A_0)}{\Delta_{13}(A_0)}
=
\Delta_{34}(A_0)
\le 1.
\]
If $\Delta_{13}(A_0) = 0$, then $(A_0)_{13} = (A_0)_{23} = 1$. Since $A_0$ is a correlation matrix, its Gram representation, shows that columns $1$, $2$, and $3$ are identical. This contradicts the assumption that the rank of $A_0$ is $3$. 


{\bf Case 3b.} \textit{Vector $v_4$ has exactly two zero components, one of which has index $1$.}
Due to the permutation symmetry, assume $v_{14} = v_{24} = 0$ and $v_{34}, v_{44} \neq 0$. The condition $A_0 v_4 = 0$ 
 applied to the last two rows yields $v_{34} + (A_{0})_{34}v_{44} = 0$ and $(A_{0})_{34}v_{34} + v_{44} = 0$. This implies that $(A_{0})_{34} = \pm 1$ (hence $\Delta_{34}(A_0) = 0$) and $v_{34} = \mp v_{44} $. 

Evaluating the first two rows with $v_{34} = \mp v_{44} $ yields $(A_{0})_{13} = \pm (A_{0})_{14}$ and $(A_{0})_{23} = \pm (A_{0})_{24}$.  If \((A_0)_{13}=(A_0)_{14} = 0\), then $\Delta_{13}(A_0) = 1$, and taking limit superior in the inequality in Lemma~\ref{2026-06-12-lem:common_K_bound}, one sees that the right-hand side is bounded above by one. Thus we can suppose that $(A_{0})_{13} = (A_{0})_{14} > 0$ and one should choose the top sign everywhere above. That is,  \((A_0)_{34}=1\), \((A_0)_{23}=(A_0)_{24}\), and \(v_{34}=-v_{44}\). 

If
\(\Delta_{13}(A_0)>0\), then taking limit superior in the inequality in
Lemma~\ref{2026-06-12-lem:common_K_bound}, one sees that the right-hand side converges to zero, 
since \(\Delta_{34}(A_0)=0\). 

If \(\Delta_{13}(A_0)=0\), then \((A_0)_{13}=1\). Since \(A_0\) is a correlation
matrix, its Gram representation, together with \((A_0)_{34}=1\), shows that
\(x_1,x_3,x_4\) are identical. This contradicts the assumption that the rank of \(A_0\) is $3$.


\subsubsection{Boundary behaviour of $R_3$} 

Using \eqref{eq:minor_spectral}, the ratio $R_3(A)$ can be expressed path-independently as follows
$$
R_3(A) = \frac{\lambda_4 \Delta_{23}(A) \Delta_{24}(A) \Delta_{34}(A)}{(u_{14}^2 + \lambda_4 M_{11}) \Delta_{12}(A) \Delta_{13}(A) \Delta_{14}(A)}.
$$
We consider two cases based on the first component of the null vector $v_4$.

{\bf Case 1.} \textit{Suppose $v_{14} \neq 0$.}
The term $u_{14}^2 + \lambda_4 M_{11}$ converges to $v_{14}^2 > 0$. If the principal minors of $A_0$ satisfy $\Delta_{1j}(A_0) > 0$ for all $j \in \{2, 3, 4\}$, the denominator converges to a positive constant. Since $\lambda_4 \to 0$, we have $R_3(A) \to 0$.

If any of these minors vanish, then by symmetry of the indexes $\{2,3,4\}$, we can assume $\Delta_{12}(A_0) = 0$. Then $(A_0)_{12} = 1$ implies $x_1 = x_2$, and therefore $\Delta_{13}(A_0) = \Delta_{23}(A_0)$ and $\Delta_{14}(A_0) = \Delta_{24}(A_0)$. If \(\Delta_{13}(A_0)>0\), then by Lemma~\ref{2026-06-12-lem:common_K_bound}
\[
\limsup_{A\to A_0}R_3(A)
\le
\limsup_{A\to A_0} \frac{\Delta_{23}(A)\Delta_{34}(A)}{\Delta_{13}(A)}
=
\frac{\Delta_{23}(A_0)\Delta_{34}(A_0)}{\Delta_{13}(A_0)}
=
\Delta_{34}(A_0)
\le1.
\]
If \(\Delta_{13}(A_0)=0\), then \(x_1=x_2=x_3\), so the first, second, and third
columns of \(A_0\) are identical, contradicting the assumption that the rank of \(A_0\) is $3$. 

{\bf Case 2.} \textit{Suppose $v_{14} = 0$.}
The condition $A_0 v_4 = 0$ implies that the principal submatrix $A_0[\{2,3,4\}]$ has a non-trivial null vector $(v_{24}, v_{34}, v_{44})^T$, yielding $\Delta_{234}(A_0) = 0$. Geometrically, this means the Gram vectors $x_2, x_3, x_4$ associated with $A_0$ span a subspace of dimension at most $2$. Because the rank of $A_0$ is $3$, the vector $x_1$ cannot lie in this subspace. Decompose $x_1$ as $x_1=\ell y + x_1^\perp$, where $\ell y$ is the orthogonal projection of $x_1$ onto the span of $x_2, x_3, x_4$ and $x_1^\perp:=x_1- \ell y$ is orthogonal to that span.
We assume that $\|y\|=1$ and $\ell \ge 0$. Since $\|x_1\|=1$, we have $|\ell| \le 1$.

If \(\ell=0\), then \(x_1\perp x_j\) for \(j=2,3,4\), so $\Delta_{12}(A_0)=\Delta_{13}(A_0)=\Delta_{14}(A_0)=1.$
Taking limit superior from both sides of the inequality in 
Lemma~\ref{2026-06-12-lem:common_K_bound}, one sees that the right-hand side is bounded above by one.

Assume now \(\ell  > 0\). By the sign assumption on the entries of $A_0$, we have \(x_1^T x_j\ge0\), hence  \(y^T x_j\ge0\), and
\begin{align}
\label{2026-06-17-est}
\Delta_{1j}(A_0)=1- \ell^2 (y^T x_j)^2\ge 1-(y^T x_j)^2=:\Delta_{yj} \mbox{ for \(j=2,3,4\).} 
\end{align}
The three numbers \(\Delta_{y2},\Delta_{y3},\Delta_{y4}\) cannot all be zero, otherwise
\(x_2,x_3,x_4\) would all be parallel to \(y\), so their span would be one-dimensional,
and \(A_0\) would have rank at most \(2\), a contradiction.  Apply Lemma~\ref{lem:low_rank_bound_sign_free} to the 
correlation matrix, $A_y$, generated by \(y,x_2,x_3,x_4\) and having rank at most \(2\), to obtain that 
\begin{align}
\label{2026-06-17-ineq}
\min \Big( \frac{\Delta_{23}(A_y)\Delta_{34}(A_y)}{\Delta_{y3}},
\frac{\Delta_{23}(A_y)\Delta_{24}(A_y)}{\Delta_{y2}},
\frac{\Delta_{24}(A_y)\Delta_{34}(A_y)}{\Delta_{y4}} \Big) \le 1,
\end{align}
where the minimum is taken over the well-defined ratios.

Suppose, without loss of generality, that the first ratio is well-defined and attains the minimum. Thus,  \(\Delta_{y3}>0\) and 
$\Delta_{13}(A_0)\ge\Delta_{y3}>0$. By
Lemma~\ref{2026-06-12-lem:common_K_bound},
\begin{align*}
\limsup_{A\to A_0} R_3(A)
&\le 
\limsup_{A\to A_0} \frac{\Delta_{23}(A)\Delta_{34}(A)}{\Delta_{13}(A)}
=
\frac{\Delta_{23}(A_0)\Delta_{34}(A_0)}{\Delta_{13}(A_0)} \\
&\le
\frac{\Delta_{23}(A_y)\Delta_{34}(A_y)}{\Delta_{y3}}
\le 1,
\end{align*}
where we used the fact that $\Delta_{ij}(A_0)=\Delta_{ij}(A_y)$ for $ij=23,24,34$.

\subsection{Suppose $A_0$ has rank at most $2$}  

If the minors $\Delta_{12}(A_0)$, $\Delta_{13}(A_0)$, $\Delta_{14}(A_0)$ are not all zero, then without loss of generality assume that $\Delta_{12}(A_0)>0$ and the minimum in Lemma~\ref{lem:low_rank_bound_sign_free} is attained by its first ratio. Taking limit superior from both sides of the inequality in Lemma~\ref{2026-06-12-lem:common_K_bound}, shows that $\limsup_{A \to A_0} R_i(A) \le 1$ for $i=2,3$.

If $\Delta_{12}(A_0)=\Delta_{13}(A_0)=\Delta_{14}(A_0)=0$, then $(A_0)_{12}=(A_0)_{13}=(A_0)_{14}=1$ implying that $x_1=x_2=x_3=x_4$ and so $A_0$ is the all-one matrix. Let \(A_n = (a_{ij}^{n})\) be a sequence of positive definite correlation matrices approaching $A_0$. Write
\[
\delta_{ij}^{n}:=1-a_{ij}^{n}, \,\,\, 
\varepsilon_n:=\max_{i<j}\delta_{ij}^{n} \mbox{ and } m_n:=\max_{j=2,3,4}\delta_{1j}^{n}.
\]
For \(n\) large, we have \(a_{ij}^{n} \in (0,1)\) and \(\varepsilon_n\to0\).
Let \(x_1^{n},\ldots,x_4^{n}\) be unit vectors whose Gram matrix is \(A_n\).
Then
\[
\|x_i^{n}-x_j^{n}\|^2
=
2(1-a_{ij}^{n})
=
2\delta_{ij}^{n}.
\]

We claim that \(m_n\ge \varepsilon_n/4\). Indeed, if the maximum
\(\varepsilon_n\) is attained by a pair of indexes involving index \(1\), this is immediate.
Otherwise, suppose it is attained by a pair \(i,j\in\{2,3,4\}\). Then by the triangle
inequality, we have
\[
\sqrt{2\varepsilon_n}
=
\|x_i^{n}-x_j^{n}\|
\le
\|x_i^{n}-x_1^{n}\|
+
\|x_1^{n}-x_j^{n}\|
\le
2\sqrt{2m_n},
\]
hence \(m_n\ge \varepsilon_n/4\).

For each $n$, choose \(j_n\in\{2,3,4\}\), such that \(\delta_{1j_n}^{n}=m_n\). On the one hand,  we have
\[
\Delta_{1j_n}(A_n)
=
1-(a_{1j_n}^{n})^2
=
(1-a_{1j_n}^{n})(1+a_{1j_n}^{n})
\ge \delta_{1j_n}^{n} = m_n\ge \varepsilon_n/4 ,
\]
On the other hand, for every \(k<\ell\), we have
\[
\Delta_{k\ell}(A_n)
=
(1-a_{k\ell}^{n})(1+a_{k\ell}^{n})
\le 2 \delta_{k\ell}^{n}
\le
2\varepsilon_n.
\]
Using Lemma~\ref{2026-06-12-lem:common_K_bound}, choose the term whose
denominator is \(\Delta_{1j_n}(A_n)\). Its numerator is a product of two among
\(\Delta_{23},\Delta_{24},\Delta_{34}\). Therefore, for $i=2,3$, we have
\[
R_i(A_n)
\le
\frac{(2\varepsilon_n)^2}{\varepsilon_n/4}
=
16\varepsilon_n
\to0.
\]

\section{Apendix}

\begin{lemma}
\label{2026-06-12-lem:common_K_bound}
For every positive definite correlation matrix \(A\), we have
\[
R_i(A)\le \min \Big( \frac{\Delta_{23}\Delta_{24}}{\Delta_{12}},
\frac{\Delta_{23}\Delta_{34}}{\Delta_{13}},
\frac{\Delta_{24}\Delta_{34}}{\Delta_{14}}\Big)
\]
for \(i=2,3\).
\end{lemma}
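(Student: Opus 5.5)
Both bounds come from Koteljanskii's inequality \eqref{2022-07-27-ineq-3} alone, used together with the normalization $\Delta_1=\Delta_2=\Delta_3=\Delta_4=1$ for a correlation matrix. Both $R_2$ and $R_3$ are invariant under permutations of $\{2,3,4\}$. The three quantities in the minimum are permuted among themselves by such permutations. So it suffices to prove, for $i=2,3$, the single inequality
\[
R_i(A)\le \frac{\Delta_{23}\Delta_{24}}{\Delta_{12}}.
\]
The other two bounds then follow by symmetry, and hence so does the bound by the minimum. Each ratio is handled by clearing the common factor $\Delta_{23}\Delta_{24}$ from both sides and reducing what remains to a product of Koteljanskii inequalities.

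For $R_2$, using \eqref{eq:R3_delta} and cancelling $\Delta_{23}\Delta_{24}>0$, the desired inequality becomes
\[
\Delta_{1234}^2\,\Delta_{34}\,\Delta_{12}\le \Delta_{123}\Delta_{124}\Delta_{134}\Delta_{234}.
\]
I will obtain it by multiplying two instances of \eqref{2022-07-27-ineq-3}:
\begin{itemize}
\item with $\alpha_1=\{1,3,4\}$ and $\alpha_2=\{2,3,4\}$, so that $\alpha_1\cup\alpha_2=\{1,2,3,4\}$ and $\alpha_1\cap\alpha_2=\{3,4\}$, it gives $\Delta_{1234}\Delta_{34}\le\Delta_{134}\Delta_{234}$;
\item with $\alpha_1=\{1,2,3\}$ and $\alpha_2=\{1,2,4\}$, so that $\alpha_1\cap\alpha_2=\{1,2\}$, it gives $\Delta_{1234}\Delta_{12}\le\Delta_{123}\Delta_{124}$.
\end{itemize}
All quantities are positive, so multiplying these two inequalities yields exactly what is needed.

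For $R_3$, cancelling $\Delta_{23}\Delta_{24}$ again, the claim is $\Delta_{1234}\Delta_{34}\le \Delta_{234}\Delta_{13}\Delta_{14}$. I will use the same first Koteljanskii inequality, $\Delta_{1234}\Delta_{34}\le\Delta_{134}\Delta_{234}$. It then remains to show $\Delta_{134}\le\Delta_{13}\Delta_{14}$. This is \eqref{2022-07-27-ineq-3} with $\alpha_1=\{1,3\}$ and $\alpha_2=\{1,4\}$, since their intersection is $\{1\}$ and $\Delta_1=a_{11}=1$.

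There is no genuine obstacle here. The only point needing care is choosing the index pairs so that the intersections produce exactly $\Delta_{34}$, $\Delta_{12}$ and $\Delta_1$, and checking that every cancelled minor is strictly positive, which holds because $A$ is positive definite.
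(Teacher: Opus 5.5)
Your proof is correct and follows the paper's argument: for $R_2$ it multiplies two Koteljanskii inequalities, for $R_3$ it chains one Koteljanskii inequality with $\Delta_{1jk}\le\Delta_{1j}\Delta_{1k}$, and it closes with the symmetry in $\{2,3,4\}$. The only difference is which bound is proved directly: the paper proves the $\Delta_{13}$-bound for $R_2$ and the $\Delta_{14}$-bound for $R_3$, while you prove the $\Delta_{12}$-bound for both, and this is the same computation after relabeling indices.
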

\begin{proof}
Koteljanskii's inequality gives
$\Delta_{1234}\Delta_{24}\le \Delta_{124}\Delta_{234}$,
and $\Delta_{1234}\Delta_{13}\le \Delta_{123}\Delta_{134}$.
Multiplying these inequalities and substituting into the definition of \(R_2\)
yields
\[
R_2(A)\le \frac{\Delta_{23}\Delta_{34}}{\Delta_{13}}.
\]
The other two bounds follow by the symmetry in the indices \(2,3,4\).

Again by Koteljanskii 
$\Delta_{1234}\Delta_{23}\le \Delta_{123}\Delta_{234}$.
Substituting into $R_3$, gives
\[
R_3(A)\le
\frac{\Delta_{123}\Delta_{24}\Delta_{34}}
{\Delta_{12}\Delta_{13}\Delta_{14}} \le \frac{\Delta_{24}\Delta_{34}}{\Delta_{14}},
\]
where the last inequality holds since
$A$ is a correlation matrix and by Koteljanskii
$\Delta_{123}\le \Delta_{12}\Delta_{13}$. Again, the remaining two bounds follow from the symmetry in \(2,3,4\).
\end{proof}

When the correlation matrix has low rank, the minimum in Lemma~\ref{2026-06-12-lem:common_K_bound} can be estimated directly.

\begin{lemma}\label{lem:low_rank_bound_sign_free}
Let \(A_0\) be a \(4\times4\) positive semidefinite correlation matrix
with rank at most \(2\). Then either
\[
\Delta_{12}=\Delta_{13}=\Delta_{14}=0,
\]
or at least one of the following ratios is well-defined and
\[
\min \left(
\frac{\Delta_{23}\Delta_{24}}{\Delta_{12}},
\frac{\Delta_{23}\Delta_{34}}{\Delta_{13}},
\frac{\Delta_{24}\Delta_{34}}{\Delta_{14}}
\right)\le 1,
\]
where the minimum is taken over the well-defined ratios.
\end{lemma}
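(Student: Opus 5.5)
The plan is to pass to the Gram representation and reduce everything to elementary trigonometry in the plane. Since $A_0$ is a positive semidefinite correlation matrix of rank at most $2$, it is the Gram matrix of unit vectors $x_1,x_2,x_3,x_4\in\mathbb{R}^2$. Write $x_i=(\cos\varphi_i,\sin\varphi_i)$. Then $(A_0)_{ij}=\cos(\varphi_i-\varphi_j)$, and hence
\[
\Delta_{ij}=\sin^2(\varphi_i-\varphi_j)=:s_{ij}^2 .
\]
Thus $\Delta_{ij}$ depends only on the lines $\mathbb{R}x_i$, so the signs of the entries play no role (hence the "sign free" label). Taking square roots, the ratio with denominator $\Delta_{1j}$ is $\le 1$ exactly when
\[
s_{jk}s_{jl}\le s_{1j},\qquad \{j,k,l\}=\{2,3,4\}.
\]
So it suffices to find one $j$ with $s_{1j}>0$ satisfying this inequality.

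\emph{Degenerate cases.} If $s_{12}=s_{13}=s_{14}=0$ we are in the first alternative. Suppose instead that $s_{1j}=0$ for some $j$ but $s_{1k}>0$ for some $k$. Then $x_j=\pm x_1$, so $s_{kj}=s_{k1}$. Hence the ratio with denominator $\Delta_{1k}$ equals $s_{kl}^2\le 1$, where $l$ is the remaining index, and we are done. It therefore remains to treat the case where all of $s_{12},s_{13},s_{14}$ are positive, so that all three ratios are well defined.

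\emph{Main case.} Regard the four lines as points on the projective circle of length $\pi$ and order them cyclically as $1,a,b,c$, with consecutive gaps $g_1,g_2,g_3,g_4\ge 0$ summing to $\pi$. Then
\[
s_{1a}=\sin g_1,\quad s_{ab}=\sin g_2,\quad s_{bc}=\sin g_3,\quad s_{1c}=\sin g_4,\quad s_{1b}=\sin(g_1+g_2),\quad s_{ac}=\sin(g_2+g_3).
\]
The key tool I would use is the Ptolemy-type identity
\[
s_{1b}s_{ac}=s_{1a}s_{bc}+s_{ab}s_{1c},
\]
which is just $\sin(g_1+g_2)\sin(g_2+g_3)=\sin g_1\sin g_3+\sin g_2\sin(g_1+g_2+g_3)$ together with $\sin(g_1+g_2+g_3)=\sin g_4$.

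I would argue by contradiction: assume all three inequalities fail,
\[
s_{ab}s_{ac}>s_{1a},\qquad s_{ab}s_{bc}>s_{1b},\qquad s_{ac}s_{bc}>s_{1c}.
\]
The first approach is to multiply the first and third failures, giving $s_{ac}^2 s_{ab}s_{bc}>s_{1a}s_{1c}$, and to combine this with Ptolemy, using $s_{1b}<s_{ab}s_{bc}$ from the second failure. Substituting that bound into Ptolemy gives
\[
s_{1a}s_{bc}+s_{ab}s_{1c}<s_{ab}s_{bc}s_{ac}.
\]
The first and third failures bound the two left-hand terms from below, by $\tfrac{s_{1a}^2}{s_{ab}s_{ac}}\,s_{bc}$ and $s_{ab}\,\tfrac{s_{1c}^2}{s_{ac}s_{bc}}$ respectively. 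Clearing denominators, I expect to reach a contradiction via AM--GM together with $s_{\cdot\cdot}\le 1$.

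I expect this last step to be the main obstacle. The naive choice of the neighbour closest to $1$ does not work by itself when $g_1\to 0$, so all three inequalities, or at least both neighbours $a,c$ together with Ptolemy, must be used jointly. If the algebra does not close, the fallback is a direct calculus check. By symmetry one may take $g_1\le g_4$, and then show that one of $j=a$ or $j=c$ works by splitting according to whether $g_1+g_4\le\pi/2$; this uses that $\sin$ is increasing on $[0,\pi/2]$ and the bound $\sin g_2\sin g_3\le\cos^2\!\big((g_1+g_4)/2\big)$.
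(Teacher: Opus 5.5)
Your reduction to $s_{ij}=|\sin(\varphi_i-\varphi_j)|$, the degenerate cases, the gap parametrization and the Ptolemy identity are all correct. However, the main case is not actually proved, and neither of your two routes closes as described.

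The contradiction route cannot succeed with the tools you name. The numbers $s_{ab}=s_{bc}=s_{ac}=1$, $s_{1a}=s_{1c}=\tfrac14$, $s_{1b}=\tfrac12$ satisfy Ptolemy, all three assumed failures, and $0\le s_{\cdot\cdot}\le 1$. So no AM--GM manipulation of these relations gives a contradiction. Some further geometric constraint would have to enter, e.g.\ that $s_{ab},s_{bc},s_{ac}$ come from gaps with $g_2+g_3\le\pi$, so three lines cannot be pairwise perpendicular.

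The fallback is actually false. Take $\varphi_1=0$, $\varphi_a=\epsilon$, $\varphi_b=\pi/2$, $\varphi_c=-\epsilon$, i.e.\ $g_1=g_4=\epsilon$ and $g_2=g_3=\pi/2-\epsilon$. The ratios with denominators $\Delta_{1a},\Delta_{1b},\Delta_{1c}$ are $4\cos^4\epsilon$, $\cos^4\epsilon$, $4\cos^4\epsilon$. So for $\epsilon<\pi/4$ neither $j=a$ nor $j=c$ works; only $j=b$ does. Your bound $\sin g_2\sin g_3\le\cos^2((g_1+g_4)/2)$ concerns exactly the numerator for $j=b$.

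The missing idea is to take the line $b$ cyclically opposite to $x_1$; then no Ptolemy identity or contradiction is needed. Since $(g_1+g_2)+(g_3+g_4)=\pi$, one of these sums is at most $\pi/2$.
\begin{itemize}
\item If $g_1+g_2\le\pi/2$, monotonicity of $\sin$ on $[0,\pi/2]$ gives $s_{ab}=\sin g_2\le\sin(g_1+g_2)=s_{1b}$. With $s_{bc}\le 1$ you get $s_{ab}s_{bc}\le s_{1b}$.
\item If $g_3+g_4\le\pi/2$, use $s_{bc}=\sin g_3\le\sin(g_3+g_4)=s_{1b}$ instead.
\end{itemize}
In your main case $s_{1b}>0$ because $g_1,g_4>0$, so this ratio is well defined and at most $1$.

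This is the same mechanism as the paper's proof. The paper puts $x_1=(1,0)$, takes $\theta_j\in[-\pi/2,\pi/2]$, and, among the two lines lying on the same side of $x_1$ as the middle angle, picks the one farther from $x_1$. Its angular distance to the other is at most its own angle, which is at most $\pi/2$, and the remaining factor is bounded by $1$.
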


\begin{proof}
Let \(A_0\) be the Gram matrix of unit vectors
\(x_1,x_2,x_3,x_4\in\mathbb R^2\). Since the quantities
\[
\Delta_{ij}=1-(x_i^Tx_j)^2
\]
are unchanged if any \(x_j\) is replaced by \(-x_j\), we may choose the
signs of \(x_2,x_3,x_4\) so that
$x_1^Tx_j\ge 0$, $j=2,3,4$.
This does not change any of the \(\Delta_{ij}\).
Rotate the plane so that \(x_1=(1,0)\). Then we may write
\[
x_j=(\cos\theta_j,\sin\theta_j),
\,\,\,
\theta_j\in[-\pi/2,\pi/2],
\,\,\, j=2,3,4.
\]
Hence $\Delta_{1j}=\sin^2\theta_j$ and $\Delta_{ij}=\sin^2(\theta_i-\theta_j)$.

Order the three angles:
\[
\alpha\le \beta\le \gamma,
\]
and let the corresponding indices be \(p,q,r\), so that
\[
\theta_p=\alpha,\,\,\, \theta_q=\beta,\,\,\, \theta_r=\gamma.
\]
If \(\Delta_{12}=\Delta_{13}=\Delta_{14}=0\), we are in the exceptional
case. Otherwise, not all of \(\alpha, \beta, \gamma\) are zero. Thus either $\alpha < 0$ or $0 < \gamma$. 
We consider two cases depending on the sign of $\beta.$

If $\gamma \le 0$, then we must have $\alpha < 0$ and $\beta \le 0$.
Then $0\le \beta-\alpha\le -\alpha\le \pi/2$,
so
\[
|\sin(\beta-\alpha)|\le |\sin\alpha|.
\]
Since \(|\sin(\gamma-\alpha)|\le1\) and $ |\sin\alpha| > 0$, we conclude
\[
\frac{\Delta_{pq}\Delta_{pr}}{\Delta_{1p}}
=
\left(
\frac{|\sin(\beta-\alpha)|\,|\sin(\gamma-\alpha)|}
{|\sin\alpha|}
\right)^2
\le 1.
\]
If $0 \le \alpha$, then we must have $0\le \beta$ and $0< \gamma$.
Then $0\le \gamma-\beta\le \gamma\le \pi/2$,
so
\[
|\sin(\gamma-\beta)|\le \sin\gamma.
\]
Since \(|\sin(\gamma-\alpha)|\le1\) and $\sin\gamma > 0$, we have 
\[
\frac{\Delta_{pr}\Delta_{qr}}{\Delta_{1r}}
=
\left(
\frac{|\sin(\gamma-\alpha)|\,|\sin(\gamma-\beta)|}
{|\sin\gamma|}
\right)^2
\le 1.
\]
Thus one of the three well-defined ratios is at most \(1\).
\end{proof}

\bibliographystyle{amsplain}

\section{Statements \& Declarations}

The first author was partially supported by the Natural Sciences and Engineering Research Council (NSERC) of Canada. (Grant number RGPIN-2020-06425.)

The authors have no relevant financial or non-financial interests to disclose.

%
%
%
%

\end{document}